 \newlength{\baseunit}               % the basic unit length
\newcommand\tensor{\otimes}
\newcommand\isom{\cong}
\newcommand\Spec{\operatorname{Spec}}
\newcommand\bq{\begin{equation}}
\newcommand\eq{\end{equation}}
\newtheorem{proposition}{Proposition}[section]
\newtheorem{theorem}[proposition]{Theorem}
\newtheorem{lemma}[proposition]{Lemma}
\theoremstyle{definition}
\theoremstyle{remark}
\numberwithin{equation}{section}
\newcommand{\cut}[1]{}
\newcommand\hidden[1]{}
\newcommand{\cM}{\mathcal{M}}
\renewcommand{\cR}{\mathcal{R}}
\newcommand{\cZ}{\mathcal{Z}}
\newcommand{\PP}{\mathbb{P}}
\newcommand{\QQ}{\mathbb{Q}}
\newcommand{\setmin}{{\smallsetminus}}                                %
\newcommand{\Mp}{{\mathcal{M}^+}}                                     %
\newcommand{\Rel}{{\mathcal{R}}}                                      %
\newcommand{\lar}{\longleftarrow}                                     %
\newcommand{\dra}{\dashrightarrow}                                    %
\newcommand{\ZZ}{{\mathbb{Z}}}                                        %
\newcommand{\LL}{{\mathbb{L}}}                                        %
\newcommand{\cO}{{\mathcal O}}                                        %
\newcommand{\ch}{\operatorname{\tilde{c}_1}}                          %
\newcommand{\Schk}{\operatorname{Sch}_{k}}                            %
\newcommand{\id}{\operatorname{Id}}                                   %
\title{Projectivity in Algebraic Cobordism}
\author{Jos\'e Luis Gonz\'alez and Kalle Karu}
\address{J.L. Gonz\'alez,  Dept. of Mathematics, University of British Columbia,
  Vancouver, BC V6T1Z2, CANADA  \newline \indent
K. Karu,
Dept. of Mathematics, University of British Columbia, 
  Vancouver, BC V6T1Z2, CANADA} 
\email{jgonza@math.ubc.ca, karu@math.ubc.ca}
\thanks{This research was funded by NSERC Discovery and Accelerator grants.}
\begin{document}
\begin{abstract}
The algebraic cobordism group of a scheme is generated by cycles that are proper morphisms from smooth quasiprojective varieties. We prove that over a field of characteristic zero the quasiprojectivity assumption can be omitted to get the same theory. 

\end{abstract}
\maketitle
\setcounter{tocdepth}{1} % this just includes subsections

%\tableofcontents

%************************************************************************************************************************

%%%%%%%%%%%%%%%%%%%%%%%%%%%%%%%%%%%%%%%%%%%%%%%%%%%%%%%%%%%%%%%%%%%%%%%%%%%%%%%%%%%%%%%%%%%%%%%%%%%%%%%%%%%%%%%%%%%%%%%%%%%%%%%%
%%                                                                                                                            %%
%%                                                 SECTION 1.    INTRODUCTION                                                 %%
%%                                                                                                                            %%
%%%%%%%%%%%%%%%%%%%%%%%%%%%%%%%%%%%%%%%%%%%%%%%%%%%%%%%%%%%%%%%%%%%%%%%%%%%%%%%%%%%%%%%%%%%%%%%%%%%%%%%%%%%%%%%%%%%%%%%%%%%%%%%%

\section{Introduction}

The purpose of this article is to remove the quasiprojectivity assumption in the definition of the algebraic cobordism theory of Levine and Morel \cite{Levine-Morel}. Recall that the cobordism group $\Omega_*(X)$ of a scheme $X$ is generated by cycles of the form 
\[ [f:Y\to X, L_1,\ldots,L_r],\]
where $Y$ is a smooth quasiprojective variety, $f$ is a proper morphism, and $L_1,\ldots,L_r$ are line bundles on $Y$. We will construct a similar theory $\hat{\Omega}_*(X)$ in which the varieties $Y$ are assumed to be smooth, but not necessarily quasiprojective, and we prove that the natural morphism $\Omega_*(X) \to \hat{\Omega}_*(X)$ is an isomorphism.

Levine and Pandharipande in \cite{Levine-Pandharipande} gave a different definition of an algebraic cobordism theory $\omega_*(X)$ and proved it to be isomorphic to $\Omega_*(X)$. We show that in the definition of  $\omega_*(X)$ one can similarly remove the assumption that the varieties $Y$ are quasiprojective. Since the definition of $\omega_*(X)$ is simpler than $\Omega_*(X)$, we recall it here and we also explain the definition of the theory $\hat{\omega}_*(X)$. The definitions of $\Omega_*(X)$ and $\hat{\Omega}_*(X)$ are given in Section~\ref{sec-Omega}.

We work in the category $\Schk$ of separated finite type schemes over a field $k$ of characteristic zero. 
For $X$ in $\Schk$, let $\cM(X)$ be the set of isomorphism classes of proper morphisms $f: Y\to X$ where $Y$ is a smooth quasiprojective variety in $\Schk$. Let $\Mp(X)$ be the free abelian group with basis $\cM(X)$. The elements of $\Mp(X)$ are called cycles. The class of $f: Y\to X$ in $\Mp(X)$ is denoted by $[f: Y\to X]$. When $Y\to X$ is a morphism from a possibly reducible smooth scheme $Y$ in $\Schk$, then $[Y\to X]$ stands for the sum of cycles from the irreducible components of $Y$.

A double point degeneration is a morphism $\pi: Y\to \PP^1$, with $Y$ a smooth scheme of pure dimension, such that $Y_\infty = \pi^{-1}(\infty)$ is a smooth divisor on $Y$ and $Y_0=\pi^{-1}(0)$ is a union $A\cup B$ of smooth divisors intersecting transversely along $D=A\cap B$.  Define $\PP_D = \PP(\cO_Y(A)|_D\oplus \cO_D)$.  We say that the double point degeneration is quasiprojective if $Y$ is quasiprojective.

Let $X\in \Schk$ and let $Y$ be a smooth variety. Let $p_1, p_2$ be the two projections of $X\times \PP^1$.  A double point relation is defined by a proper morphism $\pi: Y\to X\times\PP^1$, such that $p_2\circ \pi: Y\to \PP^1$ is a quasiprojective double point degeneration. Let 
\[ [Y_\infty \to X], \quad [A\to X],\quad [B\to X], \quad [\PP_D \to X] \]
be the cycles obtained by composing with $p_1$. The double point relation is 
\[ [Y_\infty \to X] -[A\to X] - [B\to X] + [\PP_D\to X] \in \Mp(X). \]

Let $\Rel(X)$ be the subgroup of $\Mp(X)$ generated by all the double point relations. The cobordism group of $X$ is defined to be
\[ \omega_*(X) = \Mp(X)/\Rel(X).\]
The group $\Mp(X)$ is graded so that $[f: Y\to X]$ lies in degree $\dim Y$. Since double point relations are homogeneous, this grading gives a grading on $\omega_*(X)$. We write $\omega_n(X)$ for the degree $n$ part of  $\omega_*(X)$. 

Now let $\hat{\cM}(X)$ be the set of isomorphism classes of proper morphisms $Y\to X$, where $Y$ is a smooth variety in $\Schk$, and let  $\hat{\cM}^+(X)$ be the free abelian group generated by $\hat{\cM}(X)$. Let $\hat{\Rel}(X)$ be the subgroup of $\hat{\cM}^+(X)$ generated by double point relations as above, defined by proper morphisms $\pi: Y\to X\times\PP^1$, where $Y$ is smooth, but not necessarily quasiprojective. Define
\[ \hat\omega_*(X) = \hat{\cM}^+(X)/\hat\Rel(X).\]
There is a natural homomorphism $\psi: \omega_*(X) \to \hat\omega_*(X)$. The main result we prove is:

\begin{theorem}\label{thm-main} 
The homomorphism $\psi: \omega_*(X) \to \hat\omega_*(X)$ is an isomorphism for all $X$ in $\Schk$.
\end{theorem}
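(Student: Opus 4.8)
The homomorphism $\psi$ is surjective essentially by a resolution-of-singularities / moving argument: given a class $[Y\to X]$ with $Y$ smooth but not quasiprojective, one wants to rewrite it modulo $\hat\Rel(X)$ as a combination of cycles from quasiprojective smooth varieties. The natural tool is Chow's lemma together with a sequence of blow-ups: one can find a projective birational morphism $\tilde Y\to Y$ with $\tilde Y$ quasiprojective, and then a double point degeneration (the "deformation to the normal cone" of the blow-up, or rather the degeneration exhibiting $Y$ as cobordant to $\tilde Y$ plus exceptional contributions, each of which has lower-dimensional or iteratively-quasiprojectivizable source) realizing $[Y\to X]$ in the image of $\psi$. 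So the crux is injectivity.

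For injectivity, I would argue that the defining data of $\hat\omega_*(X)$ can be "compressed" back into $\omega_*(X)$. Suppose $\sum n_i[Y_i\to X]$ lies in $\hat\Rel(X)$, i.e. equals a sum of double point relations coming from proper morphisms $\pi_j:\mathcal{Y}_j\to X\times\PP^1$ with $\mathcal{Y}_j$ smooth but not quasiprojective. First, using surjectivity-type arguments one reduces to the case where the $Y_i$ themselves are quasiprojective. Then the task is: each non-quasiprojective double point degeneration $\pi_j$ must be replaced, modulo relations in $\omega_*(X)$, by a $\ZZ$-linear combination of quasiprojective double point degenerations with the same "boundary" $[\mathcal{Y}_{j,\infty}]-[A_j]-[B_j]+[\PP_{D_j}]$ (up to terms that are already double point relations among quasiprojective cycles). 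This is done by resolving $\mathcal{Y}_j$: choose a projective birational $q:\mathcal{Z}_j\to\mathcal{Y}_j$ with $\mathcal{Z}_j$ quasiprojective, arranged (by further blow-ups along smooth centers, using characteristic zero) so that $q$ is an isomorphism over a neighborhood of $\mathcal{Y}_{j,0}\cup\mathcal{Y}_{j,\infty}$ and so that $\mathcal{Z}_j\to\PP^1$ is again a double point degeneration with the same fibers over $0,\infty$. Then $\pi_j$ and $q\circ\pi_j$ give the same double point relation, but the latter is quasiprojective, hence lies in $\Rel(X)$.

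Carrying this out requires two technical inputs that I would isolate as lemmas. (i) A relative Chow lemma with control near a fixed divisor: given a smooth $\mathcal{Y}\to\PP^1$ that is a double point degeneration, produce a smooth quasiprojective $\mathcal{Z}\to\PP^1$ with a projective birational $\mathcal{Z}\to\mathcal{Y}$ that is an isomorphism over $\PP^1\setminus\{$some finite set disjoint from $0,\infty\}$ and with the double-point structure preserved. Because blow-ups along smooth centers inside a quasiprojective open set do not globally quasiprojectivize, one must instead use Chow's lemma to get \emph{some} projective modification and then analyze how its fibers over $0$ and $\infty$ change; standard deformation-to-the-normal-cone techniques (as in Levine--Morel, \cite{Levine-Morel}, and Levine--Pandharipande, \cite{Levine-Pandharipande}) express the difference as further double point relations. (ii) The observation that "$[f:Y\to X]$ depends only on cobordism class" is compatible with the forgetful map, so that all the manipulations descend.

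**Main obstacle.** The hard part is input (i): quasiprojectivity is not birationally local, so one cannot simply blow up a bad locus to fix it while leaving $Y_0,Y_\infty$ untouched. One must genuinely invoke Chow's lemma to get a global projective variety dominating $\mathcal{Y}$ and then pay the price of the extra birational modification by absorbing it into double point relations — keeping careful track that these auxiliary relations are themselves among quasiprojective cycles (which they are, once the dominating variety is quasiprojective), and that the modification can be chosen to leave the already-smooth total space smooth and the fibers over $0$ and $\infty$ with the required normal-crossings structure. Verifying that such a modification exists, in families over $\PP^1$ and in a way compatible with the $\PP_D$ term, is where the real work lies.
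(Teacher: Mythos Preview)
Your surjectivity sketch is essentially the paper's Lemma~\ref{lem-surj}: blow up toward a quasiprojective model and use the blow-up double point relation, with induction on an invariant $\nu$ measuring failure of projectivity.

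For injectivity, your approach diverges from the paper's and contains a real gap at the point you yourself flag. Your ``technical input (i)'' asks for a quasiprojective $\mathcal{Z}_j\to\mathcal{Y}_j$ that is an isomorphism near the fibers over $0,\infty$. This is simply impossible whenever those fibers are not quasiprojective: a closed subscheme of a quasiprojective scheme is quasiprojective, so $\mathcal{Z}_{j,\infty}\cong\mathcal{Y}_{j,\infty}$ forces $\mathcal{Y}_{j,\infty}$ to be quasiprojective. Your proposed fix---allow the fibers to change and absorb the discrepancy into ``further double point relations,'' which you assert are ``among quasiprojective cycles once the dominating variety is quasiprojective''---does not hold up. The auxiliary relation comparing the old fiber $\mathcal{Y}_{j,\infty}$ to the new one $\mathcal{Z}_{j,\infty}$ (via a blow-up degeneration on $\mathcal{Y}_{j,\infty}\times\PP^1$) still has the non-quasiprojective $\mathcal{Y}_{j,\infty}$ as one of its terms and lives on a total space no more quasiprojective than before. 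You have traded one non-quasiprojective relation for others; no invariant is shown to decrease, and the argument does not close.

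The paper's route is different in kind: rather than manipulating relations, it builds a left inverse $d:\hat{\cM}^+(X)\to\omega_*(X)$ to $\psi$. For $[Y\to X]\in\hat{\cM}(X)$ one takes a birational modification $\pi:W\to Y\times\PP^1$, an isomorphism away from $Y\times\{0\}$, such that $D=\pi^*(Y\times\{0\})$ is an s.n.c.\ divisor with every \emph{component} quasiprojective (this is achievable by the embedded Chow's lemma, Lemma~\ref{lem-emb-Chow}); then $d[Y\to X]$ is the push-forward of the Levine--Morel divisor class $[D\to|D|]$. The key point is that one never needs $W$ or $Y$ or any total space to be quasiprojective---only the components of a single divisor---and the divisor-class formula~\eqref{eq2} uses only first Chern class operations on those quasiprojective faces. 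That $d$ is well defined and kills $\hat{\Rel}(X)$ is then checked with the ``product of divisor classes'' calculus of \cite{descentseq}. This sidesteps entirely the problem of quasiprojectivizing a degeneration while controlling both special fibers.
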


As a special case, consider $X=\Spec k$. It is proved in \cite{Levine-Morel, Levine-Pandharipande} that the vector space $\omega_*(\Spec k)\tensor \QQ$ is generated by the classes of products of projective spaces. In other words, for any smooth projective variety $Y$, the image of the class $[Y\to \Spec k]$ in $\omega_*(\Spec k)\tensor \QQ$ is equivalent, modulo double point relations, to a rational linear combination of products of projective spaces. The isomorphism $\hat{\omega}_*(\Spec k)\isom \omega_*(\Spec k)$ proves the same result for an arbitrary smooth complete $Y$.

The algebraic cobordism theory $\omega_*(X)$ has a
 functorial push-forward homomorphism $g_*: \omega_*(X)\to \omega_*(Z)$ for $g: X\to Z$ projective, and a functorial pull-back homomorphism $g^*: \omega_*(Z)\to \omega_{*+d}(X)$ for $g: X\to Z$ a  smooth quasiprojective morphism of relative dimension $d$. The two morphisms are both defined on the cycle level.  The cobordism theory also has exterior products $ \omega_*(X)\times \omega_*(W) \to \omega_*(X\times W)$, defined by 
 \[ [Y\to X] \times [Z\to W] = [Y\times Z \to X\times W].\]
These homomorphisms are compatible with first Chern class operators (see Section~\ref{sec-chernclass} below). In \cite{Levine-Morel}, a theory having projective push-forwards, smooth quasiprojective pull-backs, exterior products and first Chern class operators satisfying a set of compatibility conditions is called an oriented Borel-Moore functor with products. The theory $\omega_*(X)$ is such a theory.

In the proof of Theorem~\ref{thm-main} we will need to use the push-forward homomorphisms $g_*: \omega_*(X)\to \omega_*(Z)$ for $g: X\to Z$ proper, not necessarily projective. This push-forward homomorphism is defined at the cycle level the same way as the projective push-forward. One can easily check that it is functorial.

In the theory $\hat{\omega}_*(X)$ one can define push-forward along a proper morphism $g$, pull-back along a smooth (not necessarily quasiprojective) morphism $g$ and exterior products. These homomorphisms are defined at the cycle level, hence their functoriality and various compatibility conditions are easy to check. There is no straight-forward way to define the first Chern class operators on $\hat\omega_*(X)$. The isomorphism $\psi$ of Theorem~\ref{thm-main} induces first Chern class operators on  $\hat\omega_*(X)$ from those on $\omega_*(X)$. We will show below that these induced first Chern class operators on $\hat\omega_*(X)$ satisfy the expected properties, for example the section axiom. It is then elementary to check that the first Chern class operators are compatible with proper push-forwards, smooth pull-backs and exterior products. In summary, one can strengthen the notion of the Borel-Moore functor with products by requiring the existence of proper push-forward and smooth pull-back homomorphisms,
 with 
the same compatibility conditions as in \cite{Levine-Morel}. Then $\hat{\omega}_*$ is such a functor.

The exterior products turn $\omega_*(\Spec k)$ into a graded ring and $\omega_*(X)$ into a graded module over $\omega_*(\Spec k)$. When $X$ is a smooth quasiprojective variety, we denote by $1_X$ the class $[id_X: X\to X] \in \omega_*(X)$. Similarly, $\hat\omega_*(\Spec k)$ is a ring and  $\hat\omega_*(X)$ is a module over this ring. For a smooth variety $X$, we have the class $1_X= [id_X: X\to X] \in \hat\omega_*(X)$.

The proof of Theorem~\ref{thm-main} is very similar to the proof of the main result in \cite{descentseq}. The main difference is that the theory $\hat{\omega}_*(X)$  does not have first Chern class operators. In the proof we need to be careful that first Chern class operators are applied in the theory $\omega_*(X)$ only.

%%%%%%%%%%%%%%%%%%%%%%%%%%%%%%%%%%%%%%%%%%%%%%%%%%%%%%%%%%%%%%%%%%%%%%%%%%%%%%%%%%%%%%%%%%%%%%%%%%%%%%%%%%%%%%%%%%%%%%%%%%%%%%%%
%%                                                                                                                            %%
%%                                       SECTION 2. FIRST CHERN CLASSES AND DIVISOR CLASSES                                   %%  
%%                                                                                                                            %%
%%%%%%%%%%%%%%%%%%%%%%%%%%%%%%%%%%%%%%%%%%%%%%%%%%%%%%%%%%%%%%%%%%%%%%%%%%%%%%%%%%%%%%%%%%%%%%%%%%%%%%%%%%%%%%%%%%%%%%%%%%%%%%%%

\section{First Chern classes and Divisor classes}          \label{sec-chernclass}

We recall here the formal group law, the first Chern class operators and the divisor classes in the theory $\omega_*(X)$. We will use the notation $\omega_*(X)$ for algebraic cobordism. Since $\omega_*(X)\isom \Omega_*(X)$, the same holds for  $\Omega_*(X)$.

\subsection{Formal group law}

A formal group law on a commutative ring $R$ is a power series $F_R(u,v)\in R\llbracket u,v\rrbracket $ satisfying
\begin{enumerate}[(a)]
\item $F_R(u,0) = F_R(0,u) = u$,
\item $F_R(u,v)=  F_R(v,u)$,
\item $F_R(F_R(u,v),w) = F_R(u,F_R(v,w))$.
\end{enumerate}
Thus 
\[ F_R(u,v) = u+v +\sum_{i,j>0} a_{i,j} u^i v^j,\]
where $a_{i,j}\in R$ satisfy $a_{i,j}=a_{j,i}$ and some additional relations coming from property (c). We think of $F_R$ as giving a formal addition
\[ u+_{F_R} v = F_R(u,v).\]
There exists a unique power series $\chi(u) \in R\llbracket u\rrbracket $ such that $F_R(u,\chi(u)) = 0$. Denote $[-1]_{F_R} u = \chi(u)$. Composing $F_R$ and $\chi$, we can form linear combinations 
\[ [n_1]_{F_R} u_1 +_{F_R}   [n_2]_{F_R} u_2 +_{F_R} \cdots +_{F_R} [n_r]_{F_R} u_r \in R\llbracket u_1,\ldots,u_r\rrbracket \]
for $n_i\in \ZZ$ and $u_i$ variables.

There exists a universal formal group law $F_\LL$, and its coefficient ring $\LL$ is called the \emph{Lazard ring}. This ring can be constructed as the quotient of the polynomial ring $\ZZ[A_{i,j}]_{i,j>0}$ by the relations imposed by the three axioms above. The images of the variables $A_{i,j}$ in the quotient ring are the coefficients $a_{i,j}$ of the formal group law $F_\LL$. The ring $\LL$ is graded, with $A_{i,j}$ having degree $i+j-1$. The power series $F_\LL(u,v)$ is then homogeneous of degree $-1$ if $u$ and $v$ both have degree $-1$. We sometimes write $\LL=\LL_*$ to emphasize the grading on $\LL$.

It is shown in \cite{Levine-Morel,Levine-Pandharipande} that the ring $\omega_*(\Spec k)$ is isomorphic to $\LL$. The formal group law on $\LL$ describes the first Chern class operators of tensor products of line bundles (property (FGL)  below).

\subsection{First Chern Class Operators} \label{sect-chern}

Algebraic cobordism is endowed with first Chern class operators 
\[ \ch(L): \omega_*(X)\to \omega_{*-1}(X),\]
associated to a line bundle $L$ on $X$.  
We list three properties satisfied by the first Chern class operators $\ch(L): \omega_*(Y) \to \omega_{*-1}(Y)$ for $Y$  a smooth quasiprojective scheme and $L$ a line bundle on $Y$:
\begin{itemize}
\item[(Dim)] For $L_1,\ldots,L_r$ line bundles on $Y$, $r>\dim Y$,
\[\ch(L_1)\circ\cdots\circ \ch(L_r) (1_Y) = 0.\]
\item[(Sect)] If $L$ is a line bundle on $Y$ and $s\in H^0(Y,L)$ is a section such that the zero subscheme $i:Z\hookrightarrow Y$ of $s$ is smooth, then
\[ \ch(L)(1_Y) = i_*(1_Z).\]
\item[(FGL)] For two line bundles $L$ and $M$ on $Y$,
\[ \ch(L\otimes M)(1_Y) = F_\LL(\ch(L), \ch(M))(1_Y).\]
\end{itemize}

In the terminology of \cite{Levine-Morel, Levine-Pandharipande} the three properties imply that $\omega_*$ is an oriented Borel-Moore functor of geometric type. 

The first Chern class operators of two line bundles commute: $\ch(L)\circ\ch(M) = \ch(M)\circ \ch(L)$, and they satisfy a set of compatibility relations with smooth quasiprojective pull-backs, projective push-forwards and exterior products (see \cite{Levine-Morel}). As an example of these relations, for $L$ a line bundle on $X$ we have
\[ \ch(L)[f:Y\to X] = f_* \ch(f^*(L))(1_Y).\]
The property (Sect) above implies that if $L$ is a trivial line bundle on $X$, then the first Chern class operator of $L$ is zero.

\subsection{Divisor Classes} \label{sec-div-cl}

Recall that a divisor $D$ on a smooth scheme $Y \in \Schk$ has strict normal crossings (s.n.c.) if at every point $p\in Y$ there exists a system of regular parameters $y_1,\ldots,y_n$, such that  $D$ is defined by the equation $y_1^{m_1}\cdots y_n^{m_n}=0$ near $p$ for some integers $m_1, \ldots, m_n$. We let $|D|$ denote the support of $D$.

Let $D = \sum_{i=1}^r n_i D_i$ be a nonzero s.n.c. divisor on a smooth scheme $Y$, with $D_i$ irreducible. Let us recall the construction by Levine and Morel \cite{Levine-Morel} of the class $[D\to |D|] \in \omega_*(|D|)$. We do not need to assume that $Y$ is quasiprojective, however, we need that each component of $D$ is quasiprojective.

Let 
\[ F^{n_1,\ldots,n_r}(u_1,\ldots,u_r) = [n_1]_{F_\LL} u_1 +_{F_\LL}   [n_2]_{F_\LL} u_2 +_{F_\LL} \cdots +_{F_\LL} [n_r]_{F_\LL} u_r \in \LL\llbracket u_1,\ldots,u_r\rrbracket .\]
We decompose this power series as 
\[ F^{n_1,\ldots,n_r}(u_1,\ldots,u_r) = \sum_J F_J^{n_1,\ldots,n_r}(u_1,\ldots,u_r) \prod_{i\in J} u_i,\]
where the sum runs over nonempty subsets $J\subset \{1,\ldots, r\}$. The power series $F_J^{n_1,\ldots,n_r}$ are such that $u_i$ does not divide any nonzero term in $F_J^{n_1,\ldots,n_r}$ if $i\notin J$. 

For $i=1,\ldots,r$, let $L_i=\cO_Y(D_i)$. If $J\subset \{1,\ldots,r\}$, let  $i^J: D^J=\cap_{i\in J} D_i \hookrightarrow |D|$, and $L_i^J =  L_i|_{D^J}$. The class $[D\to|D|]$ is defined in \cite{Levine-Morel} as
\begin{equation} \label{eq2}
  [D\to|D|] = \sum_J i_*^J F_J^{n_1,\ldots,n_r} (L_1^J, \ldots, L_r^J) (1_{D^J}),
\end{equation}
where the sum runs over nonempty subsets $J\subset \{1,\ldots,r\}$ and   
$F_J^{n_1,\ldots,n_r} (L_1^J, \ldots, L_r^J)$ is the power series $F_J^{n_1,\ldots,n_r}$ evaluated on the first Chern classes of $L_1^J, \ldots, L_r^J$. 

We note that in the definition of divisor classes it is not necessary to assume that $D_i$ are irreducible. We may let them be smooth but possibly reducible divisors and then the same formula holds.

When $Y$ is quasiprojective, then the class $[D\to|D|]$ pushed forward to $Y$ becomes equal to $\ch(\cO(D))(1_Y)$. This gives a way to construct first Chern class operators in the theory $\omega_*$. If $L$ is a line bundle on $Y$, write $L=\cO_Y(A-B)$ for smooth divisors $A$ and $B$ that intersect transversely. The divisor class $[A-B\to |A-B|]$, when pushed forward to $Y$ then equals $\ch(L)(1_Y)$. The divisors $A$ and $B$ can be chosen with the help of an ample line bundle, but such divisors may not exist when $Y$ is not quasiprojective.

\subsection{Product of Divisor Classes}

Let  $D$ and $E$ be divisors on a smooth scheme $W$, such that $|D|\cup|E|$ has s.n.c. We recall from \cite{descentseq} the construction of the class 
\[ [D\bullet E \to |D|\cap|E|] \in \omega_*( |D|\cap|E|),\]
such that when $W$ is quasiprojective, then the push-forward of this class to $W$ is equal to 
\[ \ch(\cO_W(D))\circ \ch(\cO_W(E)) (1_W).\]
To define the product class, it is enough to assume that one of the two divisors has all its components quasiprojective. In the discussion below we will assume the quasiprojectivity of the components of $E$ but not of the ambient space $W$.

Let $D=\sum_i n_i D_i$ and $E=\sum_i p_i D_i$, where $D_i$, $i=1,\ldots,r$ are irreducible divisors. For $i=1,\ldots,r$, let $L_i=\cO_W(D_i)$. If $J\subset \{1,\ldots,r\}$ is such that $n_j\neq 0$ and $p_i\neq 0$ for some $i,j\in J$,  let  $i^J: D^J=\cap_{i\in J} D_i \hookrightarrow |D|\cap |E|$, and $L_i^J =  L_i|_{D^J}$.

Let the class $[D\bullet E \to |D|\cap|E|]$ be defined by the formula 
\[ \sum_{I, J} i_*^{I\cup J} F_J^{n_1,\ldots,n_r} (L_1^{I\cup J}, \ldots, L_r^{I\cup J}) F_I^{p_1,\ldots,p_r} (L_1^{I\cup J}, \ldots, L_r^{I\cup J}) \prod_{i\in I\cap J} \ch(L_i^{I\cup J})  (1_{D^{I\cup J}}).\]
Here the sum runs over pairs of nonempty subsets $I$ and $J$ of $\{1,\ldots,r\}$, such that $n_j\neq 0$ and $p_i\neq 0$ for all $j\in J$ and $i\in I$.  As in the case of divisor classes, it is enough to assume that the divisors $D_i$ are smooth but not necessarily irreducible. 

The following properties of the product of divisor classes are proved in  \cite{descentseq}:
\begin{enumerate}
\item When $D$ is a smooth reduced divisor that does not have common components with $E$, then $E'=E|_{|D|}$ is an s.n.c. divisor on $|D|$ and we have
\[ [D\bullet E \to |D|\cap|E|] = [E' \to |E'|].\]
\item The class $[D\bullet E \to |D|\cap|E|]$, when pushed forward to $|E|$, becomes equal to  
\[\ch(\cO_W(D)|_{|E|}) [E\to |E|].\] 
\item The formula for the product of divisor classes is symmetric, but not linear in either argument. Indeed, if $D=D'+F$, then the product class pushed forward to $E$ is 
\begin{gather*}  \ch(\cO_W(D'+F)|_{|E|}) [E\to |E|] = \ch(\cO_W(D')|_{|E|}) [E\to |E|] +\ch(\cO_W(F)|_{|E|}) [E\to |E|] + \\ \sum_{i,j>0} a_{i,j} \ch(\cO_W(D')|_{|E|})^i \ch(\cO_W(F)|_{|E|})^j [E\to |E|].
\end{gather*}
Here $a_{i,j}$ are the coefficients of the formal group law. However, notice that if $\ch(\cO_W(F)|_{|E|}) [E\to |E|]=0$, then the last two summands vanish and the classes
\[  [D\bullet E \to |D|\cap|E|]  \quad \textnormal{and} \quad [D'\bullet E \to |D|\cap|E|] \]
become equal when pushed forward to $|E|$.
\item As a special case of the previous argument, given $f: W\to \PP^1\times\PP^1$, let $D=f^{*}(\PP^1\times\{0\})$, $E=f^{*}(\{0\}\times \PP^1)$. Assume that $D$ and $E$ satisfy the normal crossing assumption and $E$ has every component quasiprojective. Let  $D=D'+F$, where $F$ contains the components of $D$ that map to $(0,0)$ and $D'$ contains the components that map onto $\PP^1\times\{0\}$. Then the class $[F\bullet E \to |F|\cap|E|]$ becomes zero when pushed forward to $|E|$. Indeed, the product class becomes zero when pushed forward to $|F|$ because $E|_{|F|}$ is trivial, and the morphism to $E$ factors: $|E|\cap|F| \to |F|\to |E|$. It follows that  the classes
\[  [D\bullet E \to |D|\cap|E|], \qquad [D'\bullet E \to |D|\cap|E|] \]
are equal when pushed forward to $E$.
\end{enumerate}

%%%%%%%%%%%%%%%%%%%%%%%%%%%%%%%%%%%%%%%%%%%%%%%%%%%%%%%%%%%%%%%%%%%%%%%%%%%%%%%%%%%%%%%%%%%%%%%%%%%%%%%%%%%%%%%%%%%%%%%%%%%%%%%%
%%                                                                                                                            %%
%%                                               SECTION 3.    CHOW'S LEMMAS                                                  %%
%%                                                                                                                            %%
%%%%%%%%%%%%%%%%%%%%%%%%%%%%%%%%%%%%%%%%%%%%%%%%%%%%%%%%%%%%%%%%%%%%%%%%%%%%%%%%%%%%%%%%%%%%%%%%%%%%%%%%%%%%%%%%%%%%%%%%%%%%%%%%

\section{Chow's lemmas}

The classical Chow's lemma states that every variety $Y$ admits a proper birational morphism $f:Y'\to Y$ from a quasiprojective variety $Y'$. Since $Y'$ is quasiprojective, the morphism $f$ is projective and hence $Y'$ is the blowup of $Y$ along an ideal sheaf $I$ on $Y$.

Hironaka's version of Chow's lemma is as follows (see \cite{Hironaka}). Suppose $Y$ is a smooth variety with $D$ an s.n.c. divisor on $Y$. Then there exists a proper birational morphism $g:\tilde{Y}\to Y$ from a quasiprojective variety $\tilde{Y}$, obtained by a sequence of blowups along smooth centers that intersect the exceptional locus together with the inverse image of $D$ normally. One can construct such $\tilde{Y}$ as the principalization of the ideal sheaf $I$ from the classical Chow's lemma. Then $g: \tilde{Y} \to Y$ factors through $f: Y'\to Y$. Since $f$ and $g$ are projective, the morphism $\tilde{Y}\to Y'$ is also projective, hence $\tilde{Y}$ is quasiprojective. 

Recall that if $Y$ is smooth and $D$ is an s.n.c. divisor on $Y$, then a
smooth subscheme $C\subset Y$ is said to intersect $D$ normally if at
every point $p\in Y$ we can choose a regular system of parameters
$y_1,\ldots, y_r$ so that $D$ is defined by $y_1^{n_1}\cdots
y_r^{n_r}=0$ for some $n_1,\ldots,n_r\in \ZZ$ and $C$ is defined by
vanishing of $y_{i_1},\ldots,y_{i_j}$ for some $i_1,\ldots,i_j$. If $D$
is an s.n.c. divisor and $C$ intersects it normally, then the blowup of
$Y$ along $C$ is smooth and the pull-back of $D$ together with the
exceptional divisor is again an s.n.c. divisor.

We refine Hironaka's version of Chow's lemma further as follows. Define an invariant $\nu(Y)$ of a variety $Y$ as the minimum dimension of a variety $Z$, such that there exists a projective morphism $ Y\to Z$. This is well-defined because $\id_Y:Y\to Y$ is projective. We have that $\nu(Y)=0$ if and only if $Y$ is projective. Consider now $\pi: Y\to Z$ with $\dim(Z)=\nu(Y)$. By classical Chow's lemma there exists a quasiprojective $Z'$ that is a blowup of $Z$ along an ideal sheaf $I$ on $Z$. Let $\tilde{Y}\to Y$ be the principalization of the inverse image ideal sheaf $\pi^{-1}(I) \cdot \mathcal{O}_Y$ on $Y$. Then $\tilde{Y}$ is quasiprojective by the same argument as before. Moreover, the centers of the blowups in the principalization all lie over the co-support of $I$, hence the centers have their $\nu$ invariant strictly smaller than $\nu(Y)$. 

\begin{lemma}[Embedded Chow's lemma] \label{lem-emb-Chow}
Let $W$ be a smooth variety, $D, E$ be
effective divisors on $W$ such that $D+E$ has s.n.c. Then there exists a birational morphism $g:\tilde{W}\to
W$, obtained by a sequence of blowups of smooth centers that lie over
$|D|$ and intersect the pull-back of $D+E$ together with the exceptional
locus normally, such that every component $\tilde{D}_i$ of the
pull-back $\tilde{D}=g^*(D)$ is quasiprojective.
\end{lemma}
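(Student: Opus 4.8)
The plan is to induct on the invariant $m:=\max_i \nu(D_i)$, where $D_i$ runs over the irreducible components of $D$, using the refinement of Chow's lemma established above. Since Hironaka's algorithms may be run compatibly with a given s.n.c.\ boundary, that refinement takes the following form: for a smooth variety $Y$ carrying an s.n.c.\ divisor $\Delta$, there is a projective birational morphism $Y''\to Y$, obtained by a sequence of blowups along smooth centers $C$ meeting the total transform of $\Delta$ together with the accumulated exceptional locus normally, such that $\nu(C)<\nu(Y)$ for every center $C$ and such that $Y''$ is quasiprojective. I will also use two elementary facts: (i) if $g\colon A\to B$ is projective then $\nu(A)\le\nu(B)$ (hence strict transforms, and projectivized normal bundles of smooth centers, have $\nu$ no larger than the original variety, resp.\ than the center); and (ii) the blowup of a quasiprojective variety along a closed subvariety is again quasiprojective. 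The base case $m=0$ is immediate: then each component of $D$ is projective, so $g=\id_W$ works.

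For the inductive step, let $D_1,\dots,D_s$ be the components of $D$ with $\nu(D_i)=m$, and process them one at a time. At the beginning of the $i$-th stage $D_i$ has become a smooth strict transform $D_i'$ inside the current total space $W'$, carrying the s.n.c.\ boundary $\Delta_i$ cut out on $D_i'$ by the other components of the total transform of $D+E$ together with the exceptional divisors accumulated so far; note $\nu(D_i')\le m$ by (i). I apply the refined Chow's lemma to $(D_i',\Delta_i)$ and perform the resulting sequence of blowups on the ambient $W'$ using the same centers $C$: since each such $C$ is contained in the current strict transform of $D_i'$, and the strict transform of a smooth divisor under the blowup of an ambient smooth variety along a smooth subvariety of that divisor is just the blowup of the divisor, the strict transform of $D_i'$ undergoes exactly the chosen sequence and therefore becomes quasiprojective at the end of stage $i$; in all later stages it is only blown up along smooth centers, so it stays quasiprojective by (ii). Two things must be checked about these centers $C$: that they lie over $|D|$ --- clear, since $C\subset D_i'$ maps into $D_i\subset|D|$ --- and that a smooth $C$ meeting $\Delta_i$ and the exceptional locus normally inside $D_i'$ also meets the total transform of $D+E$ and the exceptional locus normally inside $W'$. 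The latter is a local computation: in coordinates $y_1,\dots,y_n$ on $W'$ with $D_i'=\{y_1=0\}$, the total transform of $D+E$ together with the exceptional locus has support $\{y_1\cdots y_r=0\}$ and $C$ is the vanishing of $y_1$ together with a subset of $y_2,\dots,y_n$, which is manifestly normal crossings. Writing $h_1\colon W_1\to W$ for the composite of all $s$ stages, we conclude: the strict transforms of $D_1,\dots,D_s$ are quasiprojective; each exceptional divisor of $h_1$, being a projectivized normal bundle over a center $C$ with $\nu(C)<m$, has $\nu\le m-1$ by (i); and the strict transform of any remaining component $D_j$ of $D$ has $\nu\le\nu(D_j)\le m-1$. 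Hence every component of $h_1^*D$ is either quasiprojective or has $\nu\le m-1$.

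Now let $D'$ be the reduced sum of those components of $h_1^*D$ which are not quasiprojective, and let $E'$ be the reduced divisor with support $h_1^{-1}(|D|\cup|E|)\setminus|D'|$. Then $D'+E'$ is the support of $h_1^*(D+E)$, hence s.n.c., and every component of $D'$ has $\nu\le m-1$. By the induction hypothesis applied to $(W_1,D',E')$ there is $h_2\colon W_2\to W_1$, a composition of blowups along smooth centers lying over $|D'|$ and meeting the total transform of $D'+E'$ together with the exceptional locus normally, such that every component of $h_2^*D'$ is quasiprojective. Set $g:=h_1\circ h_2\colon W_2\to W$. Its centers lie over $|D|$, and they meet the total transform of $D+E$ together with the full exceptional locus normally, because the exceptional divisors of $h_1$ and the total transform of $D$ are all components of $D'+E'$. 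Finally, every component of $g^*D$ is quasiprojective: the strict transform under $h_2$ of a quasiprojective component of $h_1^*D$ is an iterated blowup of a quasiprojective variety along smooth centers, hence quasiprojective by (ii); and every other component of $g^*D$ --- namely the strict transforms of the components of $D'$ and the exceptional divisors of $h_2$, all of which lie over $|D'|$ --- is a component of $h_2^*D'$ and therefore quasiprojective by the induction hypothesis. This closes the induction.

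The conceptual content is brief: the $\nu$-refinement of Chow's lemma forces every newly created exceptional divisor to be strictly ``less non-projective'' than the component it arose from, so the induction on $m=\max_i\nu(D_i)$ terminates. I expect the main obstacle to be the bookkeeping in the inductive step --- verifying that all normal-crossings conditions survive when the blowups quasiprojectivizing a boundary component $D_i$ are transported from $D_i$ to the ambient space, choosing correctly which divisor plays the role of the auxiliary divisor $E$ at each stage, and checking that every component of the final $g^*D$ falls into one of the two controlled classes (quasiprojective strict transforms of old components versus components governed by the induction hypothesis on $(W_1,D',E')$).
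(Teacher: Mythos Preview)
Your proof is correct and follows essentially the same approach as the paper: apply the $\nu$-refined Chow's lemma to a component $D_i$ (with its induced s.n.c.\ boundary), lift the resulting blowups to the ambient variety, observe that the new exceptional components have strictly smaller $\nu$, and induct. The paper's write-up is terser---it simply says ``By induction on $\nu$ we can make all components quasiprojective''---whereas you have spelled out the induction on $m=\max_i\nu(D_i)$ and the requisite normal-crossings bookkeeping explicitly.
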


\begin{proof}
Let $D_i$ be a component of $D$. Let $D'$ be the divisor
$D'=(D+E-mD_i)|_{D_i}$, where $m$ is the coefficient of $D_i$ in $D+E$.
We apply the refined version of Chow's lemma to the variety $D_i$ and divisor $D'|_{D_i}$. This gives a quasiprojective variety $\tilde{D}_i$ obtained by a sequence of blowups of $D_i$ along smooth centers. Let us now perform the same sequence of blowups on $W$ (blow up $W$
along the same centers lying in $D_i$ and in the strict transforms of
$D_i$), to get $g: \tilde{W}\to W$. Then $\tilde{D}_i$ is isomorphic to
the strict transform of $D_i$ in $\tilde{W}$. Such blowups introduce new
components to the divisor $g^*(D)$. However, the new exceptional divisors are projective bundles over the centers of blowups, hence their $\nu$-invariant is strictly smaller than $\nu(D_i)$. By induction on $\nu$ we can make all components quasiprojective.
\end{proof}

Recall the natural homomorphism $\psi: \omega(X)\to \hat{\omega}(X)$ from Theorem~\ref{thm-main}.

\begin{lemma}\label{lem-surj} The homomorphism
 \[ \psi: \omega(X)\to \hat{\omega}(X).\]
is surjective.
\end{lemma}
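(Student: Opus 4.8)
The plan is to show that every generator $[Y \to X]$ of $\hat\omega_*(X)$, where $Y$ is a smooth (possibly non-quasiprojective) variety and $Y \to X$ is proper, lies in the image of $\psi$. Since $\psi$ is defined on generators by sending the class of a proper morphism from a smooth quasiprojective variety to its class in $\hat\omega_*(X)$, it suffices to express $[Y \to X]$ in $\hat\omega_*(X)$ as a sum of classes $[Y' \to X]$ with each $Y'$ smooth \emph{and} quasiprojective. The natural tool is a resolution-type argument using the refined Chow's lemma: given the smooth variety $Y$, apply the classical/Hironaka form of Chow's lemma to produce a projective birational morphism $h: \tilde Y \to Y$ with $\tilde Y$ smooth and quasiprojective, obtained as a sequence of blowups along smooth centers. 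I would then argue by induction on the invariant $\nu(Y)$, the base case $\nu(Y)=0$ being trivial since then $Y$ is projective, hence quasiprojective.

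The key step is to compare $[Y \to X]$ with $[\tilde Y \to X]$ modulo double point relations in $\hat\omega_*(X)$. The standard mechanism is the blowup double point relation: for a blowup $\mathrm{Bl}_C Y \to Y$ along a smooth center $C$ with exceptional divisor $\tilde C = \PP(N_{C/Y}) = \PP_D$-type bundle, the deformation to the normal cone gives a double point degeneration $\pi: M \to \PP^1$ over $Y$ (and hence over $X \times \PP^1$ after composing with $Y \to X$) whose special fiber at $0$ is $\mathrm{Bl}_C Y \cup \PP(N \oplus \cO)$ glued along $\tilde C$, and whose fiber at $\infty$ is $Y$ itself. Reading off the double point relation yields, in $\hat\omega_*(X)$,
\[
[Y \to X] = [\mathrm{Bl}_C Y \to X] + [\PP(N_{C/Y} \oplus \cO) \to X] - [\tilde C \to X].
\]
Crucially, this relation only requires the ambient $M$ and its pieces to be smooth, \emph{not} quasiprojective, so it is a legitimate element of $\hat\Rel(X)$. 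The varieties $C$, $\tilde C$, and $\PP(N \oplus \cO)$ all fiber projectively over $C$, so their $\nu$-invariants are bounded by $\nu(C) < \dim Y$; if $C$ is chosen inside the locus controlling $\nu(Y)$ (as in the refined Chow's lemma) one arranges $\nu(\mathrm{Bl}_C Y) \le \nu(Y)$ and $\nu$ of the extra pieces strictly smaller. Iterating the sequence of blowups from Chow's lemma and applying the inductive hypothesis to each lower-$\nu$ piece, we reduce $[Y \to X]$ to a sum of classes of proper morphisms from smooth quasiprojective varieties, which lie in $\mathrm{im}(\psi)$.

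I expect the main obstacle to be the bookkeeping of the $\nu$-invariant through the sequence of blowups: one must ensure that at each stage the blowup center can be chosen so that the exceptional pieces genuinely have strictly smaller $\nu$, and that the strict transform's $\nu$ does not increase, so that the induction is well-founded. This is exactly what the refined Chow's lemma (and Lemma~\ref{lem-emb-Chow}) is engineered to provide — the centers lie over the co-support of the relevant ideal sheaf, hence over a locus of strictly smaller $\nu$ — but making the double point relation match the geometry of these specific blowups (verifying that the deformation to the normal cone is a bona fide double point degeneration with the claimed fibers, smooth total space, transverse special fiber) requires care. A secondary subtlety is that the double point relation as stated is for $\pi: Y \to X \times \PP^1$ with the target being $X \times \PP^1$; one composes the deformation space $M$ (which maps to $Y \times \PP^1$, then via $Y \to X$ to $X \times \PP^1$) and checks properness of this composite, which follows from properness of $Y \to X$ and of $M \to Y \times \PP^1$. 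Once these geometric verifications are in place, the surjectivity follows formally by the induction on $\nu$.
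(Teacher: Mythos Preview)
Your approach is essentially the same as the paper's: apply the refined Chow's lemma to obtain a sequence of blowups $Y=Y_0\leftarrow\cdots\leftarrow Y_n$ along smooth centers $C_i$ with $Y_n$ quasiprojective and $\nu(C_i)<\nu(Y)$, use the deformation-to-the-normal-cone double point relation at each step, and conclude by induction on $\nu$. Two small points are worth noting.

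First, your displayed relation is not quite right: the double point relation reads
\[
[Y\to X]=[\mathrm{Bl}_C Y\to X]+[\PP(N_{C/Y}\oplus\cO)\to X]-[\PP_{\tilde C}\to X],
\]
where $\PP_{\tilde C}=\PP(\cO(A)|_{\tilde C}\oplus\cO_{\tilde C})$ is a $\PP^1$-bundle over $\tilde C=A\cap B$, not $\tilde C$ itself. This slip is harmless for the argument, since $\PP_{\tilde C}$ is still projective over $C$ and hence also has $\nu<\nu(Y)$.

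Second, your worry about controlling $\nu(\mathrm{Bl}_C Y)$ at each intermediate stage is unnecessary. The paper simply telescopes the relations for all $i$ to obtain
\[
[Y\to X]=[Y_n\to X]+\sum_i\bigl([E_i\to X]-[\PP_{D_i}\to X]\bigr),
\]
where $Y_n$ is already quasiprojective and each $E_i$, $\PP_{D_i}$ is projective over $C_i$; only the $\nu$-invariants of the $C_i$ matter, and these are $<\nu(Y)$ by the refined Chow's lemma. So no bookkeeping of $\nu(Y_i)$ is needed.
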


\begin{proof}
Let $[Y\to X]$ be a cycle in $\hat{\cM}(X)$. Consider a sequence of blowups along smooth centers
\[ Y=Y_0\lar Y_1\lar \cdots \lar Y_n,\]
such that $Y_n$ is quasiprojective. 

Each blowup $Y_{i+1}\to Y_i$ along a center $C_i$ gives a double point relation in  $\hat{\omega}(X)$. Let $W= Y_i\times \PP^1$, and let $\tilde{W}\to W$ be the blowup along $C_i\times\{0\}\subset W$. Then $\tilde{W}_0$ has two components, $Y_{i+1}$ and the exceptional divisor $E_i$, intersecting transversely along $D_i$. The double point relation in $\hat{\omega}(X)$ is
\[ [Y_i\to X] = [Y_{i+1}\to X] + [E_i\to X] - [\PP_{D_i}\to X].\]
Combining these for all $i$, we get
\[ [Y\to X] = [Y_n\to X] + \sum_i \left( [E_i\to X] - [\PP_{D_i}\to X] \right) .\]
The class $[Y_n\to X]$ lies in the image of $\psi$ because $Y_n$ is quasiprojective. Since $E_i$ and $\PP_{D_i}$ are projective over the center $C_i$, their $\nu$-invariant is strictly smaller than $\nu(Y)$ and we may assume by induction on $\nu$ that the classes $[E_i\to X]$ and  $[\PP_{D_i}\to X]$ also lie in the image of $\psi$.
\end{proof}

%%%%%%%%%%%%%%%%%%%%%%%%%%%%%%%%%%%%%%%%%%%%%%%%%%%%%%%%%%%%%%%%%%%%%%%%%%%%%%%%%%%%%%%%%%%%%%%%%%%%%%%%%%%%%%%%%%%%%%%%%%%%%%%%
%%                                                                                                                            %%
%%                                         SECTION 4.   PROOF OF THE MAIN THEOREM                                             %%
%%                                                                                                                            %%
%%%%%%%%%%%%%%%%%%%%%%%%%%%%%%%%%%%%%%%%%%%%%%%%%%%%%%%%%%%%%%%%%%%%%%%%%%%%%%%%%%%%%%%%%%%%%%%%%%%%%%%%%%%%%%%%%%%%%%%%%%%%%%%%

\section{Proof of the main theorem}

We will now prove Theorem~\ref{thm-main}. We follow the argument of the main result in \cite{descentseq}, which in its turn was modeled after a proof in  \cite{Levine-Morel}. The proof that $\psi$ is injective has two steps:
\begin{enumerate}
 \item Define a distinguished lifting $\hat{\cM}^+(X)\stackrel{d}{\longrightarrow} \omega_*(X)$, such that the composition 
 \[ {\Mp}(X) {\longrightarrow} \hat{\cM}^+(X)  \stackrel{d}{\longrightarrow} \omega_*(X) \]
 is the canonical homomorphism.
\item Show that $d$ maps $\hat{\Rel}(X)$ to zero, hence it descends to $d: \hat{\omega}_*(X) \to \omega_*(X)$, providing a left inverse to $\psi$ and proving that $\psi$ is injective.
\end{enumerate}

Once we prove that $\psi$ is injective, Lemma~\ref{lem-surj} implies that $\psi$ is an isomorphism, hence the left inverse $d$ of $\psi$ is in fact a two-sided inverse.

\subsection{Distinguished Liftings}

Let $[Y\to X]$ be a cycle in $\hat{\cM}(X)$. We construct a set of elements in $\omega_*(X)$ that we call distinguished liftings of the cycle.

Let $W$ be a smooth variety and $\pi:W\to Y\times \PP^1$ a proper birational morphism that is an isomorphism over $Y\times(\PP^1\setmin\{0\})$.   Assume that
 $D=\pi^*(Y\times \{0\}) $ is a divisor of s.n.c on $W$ with every component quasiprojective. Given such $W$, the class $[D\to |D|]$ pushed forward to $X$ is called a distinguished lifting of the cycle $[Y\to X]$. 

Note that, even though every component of $D$ is quasiprojective, the scheme $|D|$ may fail to be quasiprojective and the map $|D|\to X$ may not be projective. Thus the push-forward homomorphism $\omega_*(|D|)\to \omega_*(X)$ is along a proper morphism. It follows from the functoriality of the push-forward homomorphism that the distinguished liftings of $[Y\to X]$ are the distinguished liftings of $1_Y$ pushed forward to $X$.

Distinguished liftings always exist. For example, by Lemma~\ref{lem-emb-Chow}, we can obtain a variety $W$ by a sequence of blowups of $Y\times\PP^1$ with centers lying over $Y\times \{0\}$. An arbitrary $W$ as described above is obtained from $Y\times\PP^1$ by a sequence of blowups and blowdowns along smooth centers lying over $Y\times \{0\}$ \cite{Wlodarczyk, AKMW}.

\begin{lemma}
Any two distinguished liftings of a cycle $[Y\to X] \in \hat\cM(X)$ are equal in $\omega_*(X)$.
\end{lemma}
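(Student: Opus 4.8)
The plan is to reduce the statement to the comparison of two distinguished liftings that differ by a single elementary modification, and then to handle that elementary case by an explicit double point relation in $\omega_*(X)$ together with the divisor class and product of divisor class formulas from Section~\ref{sec-chernclass}. First I would recall from \cite{Wlodarczyk, AKMW} that any two smooth models $W$ and $W'$ of $Y\times\PP^1$ as in the definition of distinguished liftings — proper birational over $Y\times\PP^1$, isomorphisms over $Y\times(\PP^1\setminus\{0\})$, with the fiber over $0$ a quasiprojective-component s.n.c. divisor — are connected by a chain of smooth blowups and blowdowns, each having center lying over $Y\times\{0\}$, and (by applying the embedded Chow's lemma, Lemma~\ref{lem-emb-Chow}, at each stage) each intermediate model can also be taken to have its fiber over $0$ an s.n.c. divisor with quasiprojective components. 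Hence it suffices to prove the following: if $W'\to W$ is a single blowup along a smooth center $C$ lying over $Y\times\{0\}$ and intersecting $D=\pi^*(Y\times\{0\})$ normally, then the push-forwards to $X$ of $[D\to |D|]$ and $[D'\to |D'|]$ agree, where $D'$ is the total transform of $Y\times\{0\}$ on $W'$.

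For this elementary step I would work on $W'$, writing $g:W'\to W$ for the blowup, $\mathcal{E}$ for the exceptional divisor, and $D' = g^*D$. The strategy is to compare $[D'\to|D'|]$ with $g^*[D\to|D|]$ directly via the explicit formula \eqref{eq2}: the components of $D'$ are the strict transforms $\tilde D_i$ of the components $D_i$ of $D$ together with $\mathcal E$, and the multiplicity of $\mathcal E$ in $D' = g^*D$ is $\sum_{i : C\subset D_i} n_i$. One then expands $F^{n_1,\ldots,n_r,m}$ (with $m$ the multiplicity of $\mathcal E$) and matches terms against the pull-back of the expansion of $F^{n_1,\ldots,n_r}$ along $g$, using that $g$ restricted to each stratum $D'^J$ is either an isomorphism onto $D^J$ (when $J$ does not involve $\mathcal E$ and the corresponding stratum in $W$ is disjoint from $C$) or a projective bundle; the relevant bookkeeping is exactly the computation already carried out in \cite{descentseq, Levine-Morel} showing that divisor classes are compatible with such blowups, and that $[D'\to|D'|]$ pushed to $|D|$ equals $[D\to|D|]$. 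The point is that after pushing forward all the way to $X$ (along proper, not necessarily projective, morphisms, whose functoriality was noted in the introduction) the two classes coincide.

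The main obstacle — and the reason the argument is not merely a citation of \cite{descentseq} — is that $\omega_*$ has first Chern class operators only on quasiprojective schemes, so when I invoke the product of divisor classes $[D\bullet E\to|D|\cap|E|]$ or Chern class operators in the expansion, I must make sure each such operator is applied on a component $D_i^J$ (or an intersection thereof), which is quasiprojective by hypothesis, and never on the ambient $W$, $W'$, or the generally non-quasiprojective supports $|D|$, $|D'|$. Concretely, I would phrase the whole comparison as an identity among classes supported on $|D'|$ and $|D|$, obtained by pushing forward, stratum by stratum, classes of the form $i_*^J(\text{power series in }\ch(L_i^J))(1_{D^J})$ living on the quasiprojective strata; the formal group law manipulations (associativity, the decomposition $F^{\ldots}=\sum_J F_J^{\ldots}\prod_{i\in J}u_i$) take place purely at the level of power series in $\LL$, so no Chern class operator on a bad scheme is ever needed. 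Once the elementary blowup case is settled, composing along the Włodarczyk–AKMW chain (blowups and blowdowns, the latter handled by symmetry) and pushing forward to $X$ finishes the proof.
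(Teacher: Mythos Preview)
Your approach differs from the paper's in both the reduction step and the elementary step, and the elementary step as written has a real gap.

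\textbf{Reduction step.} Your claim that one can ``apply embedded Chow's lemma at each stage'' of the weak factorization to force every intermediate model to have quasiprojective divisor components is not how weak factorization works: inserting extra blowups at $Z_i$ destroys the single-blowup relation between $Z_{i-1}$ and $Z_{i+1}$, so you no longer have a chain of elementary steps. The paper avoids this by first reducing to the case where $h\colon W_1\to W_2$ is a projective \emph{morphism} (via Lemma~1.3.1 of \cite{AKMW}), and then invoking weak factorization in the form where every intermediate $Z_i$ is projective over $W_2$; this automatically makes every component of $g_i^*(D_2)$ quasiprojective, since each such component maps projectively into an irreducible component of $|D_2|$, which is quasiprojective by hypothesis. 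One can patch your version using the projectivity clause of weak factorization (half the $Z_i$ projective over $W_1$, half over $W_2$), but not with Chow's lemma as you suggest.

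\textbf{Elementary step.} Here is the substantive gap. You propose to show directly, via the explicit formula~\eqref{eq2}, that under a single blowup $g\colon W'\to W$ the class $[D'\to|D'|]$ pushes forward to $[D\to|D|]$ in $\omega_*(|D|)$. You cite ``the computation already carried out in \cite{descentseq, Levine-Morel}'', but what those references give is the identity $\iota_*[D\to|D|]=\ch(\cO_W(D))(1_W)$ in $\omega_*(W)$ for $W$ \emph{quasiprojective}---an identity on the ambient space, not on $|D|$. Blowup invariance \emph{at the level of} $\omega_*(|D|)$ is a strictly finer statement, and the usual route to it (via $\ch(\cO_W(D))$ and the projection formula) is exactly what is unavailable here since $W$ need not be quasiprojective. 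Expanding $F^{n_1,\dots,n_r,m}$ and matching strata is not just bookkeeping: you would need the projective bundle formula along $\mathcal E\to C$ together with a careful analysis of how the $F_J$ decomposition interacts with adding the exceptional component, and you have not indicated how to carry this out without at some point invoking a Chern class identity on a non-quasiprojective scheme.

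\textbf{What the paper does instead.} The paper sidesteps this entirely. For the single-blowup case it passes to $V_2=W_2\times\PP^1$, blows up $C\times\{0\}$ to get $V_1$, and works with the product class $[D'\bullet E\to |D'|\cap|E|]$ coming from the two pullback divisors $D=f^*(\PP^1\times\{0\})$ and $E=f^*(\{0\}\times\PP^1)$. The point is that $E$ is built from $D_2\times\PP^1$ and hence inherits quasiprojective components, so the product-of-divisor-classes machinery of Section~\ref{sec-chernclass} applies. Property~(1) of that machinery identifies $[D'\bullet E]$ with $[D_1\to|D_1|]$; property~(4) lets one drop the exceptional piece $F$; and property~(2) identifies the push-forward to $|E|$ with $\ch(\cO(D))[E\to|E|]$, which agrees with $[D_2\to|D_2|]$ pushed in via a section. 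Every Chern class operator in this argument lands on a quasiprojective scheme by design. Your proposal, by contrast, tries to work entirely inside $W$ and $W'$, where no such control is available.
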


\begin{proof}
Consider smooth varieties $W_1$ and $W_2$ defining two distinguished liftings as images of  $[D_1\to |D_1|]$ and $[D_2\to |D_2|]$ in $\omega_*(X)$. 
We will prove below that if the birational map $h: W_1\dra W_2$ is a projective morphism, then the class $[D_1\to |D_1|]$, when pushed forward to $|D_2|$, becomes equal  to $[D_2\to |D_2|]$. This clearly proves the lemma in case $h$ is a projective morphism. (Notice that we need to use the functoriality of the push-forward homomorphism along the composition $|D_1|\to |D_2| \to X$. The morphisms here are proper, but not necessarily projective.) The case of general $h$ can be reduced to the case of projective $h$ as follows. There exist projective morphisms $W_1'\to W_1$ and $W_2'\to W_2$, both obtained by sequences of blowups along smooth centers lying over $Y\times\{0\} \in \PP^1$, such that the birational map 
$W_1'\dra W_2'$ is a projective morphism. (See Lemma 1.3.1 in \cite{AKMW} for a proof.) We may then apply the case of projective $h$ to each one of the three projective morphisms.

Now assume that $h:W_1\to W_2$ is a projective morphism. By the weak factorization theorem \cite{Wlodarczyk, AKMW}, we can factor $h$ into a sequence of blowups and blowdowns along smooth centers. Moreover, the factorization can be chosen so that if $Z_{i+1}\to Z_{i}$ is one blowup of $C\subset Z_i$ in this factorization,  then the birational map $g_i: Z_i \dra W_2$ is a projective morphism, $g_i^*(D_2)$ is an s.n.c. divisor on $Z_i$, the center $C$ lies in the support of the divisor $g_i^*(D_2)$ and intersects it normally. 

We may thus assume that $h: W_1 \to W_2$ is the blowup of $W_2$ along a smooth center $C\subset W_2$ that lies in the support of $D_2$ and intersects it normally. 
%Let $\tilde{W}_2 \to W$ be the blowup along a closed subscheme $Z \subset W$, supported on $Y\times\{0\} \subset W$.

Let $V_2 = W_2\times\PP^1$. Let  $V_1$ be the blowup of $V_2$ along $C\times\{0\} \subset V_2$. Let $f: V_1 \to \PP^1\times\PP^1$  be the projection. Consider the divisors $D=f^*(\PP^1\times\{0\})$ and $E=f^*(\{0\}\times \PP^1)$. Then $D+E$ is an s.n.c. divisor  and $E$ has all its components quasiprojective. Moreover, $D=D'+F$, where $F$ is the exceptional divisor of the blowup, lying over $(0,0)\in\PP^1\times\PP^1$, and $D'\isom W_1$. Since $D'$ is smooth, having no common component with $E$, and $E|_{D'} = D_1$, we get 
\[ [D'\bullet E\to |D'|\cap |E|] = [D_1\to |D_1|]. \]
When pushed forward to $|E|$, this class becomes equal to $\ch(\cO(D))[E\to |E|]$, which itself is equal to  $[D_2\to |D_2|]$ pushed forward to $|E|$ by a section $s: |D_2|\to |E|$ of the projection $|E|\to  |D_2|$ . The two classes are equal when pushed forward to $|D_2|$.
\end{proof}

The previous lemma proves that distinguished liftings are unique. 
We extend the liftings of generators $[Y\to X]$ linearly to a group homomorphism $d: \hat{\cM}^+(X)\to \omega_*(X)$. If $Y$ is quasiprojective, then we can take the distinguished lifting of $[Y\to X]\in\hat\omega_*(X)$ to be $[Y\to X]\in \omega_*(X)$,  hence the composition
 \[ \Mp(X) {\longrightarrow} \hat{\cM}^+(X)  \stackrel{d}{\longrightarrow} \omega_*(X) \]
is the canonical projection.

\subsection{Proof of Theorem~\ref{thm-main}}

It remains to prove that the distinguished lifting $d: \hat{\cM}^+(X)\to \omega_*(X)$ maps $\hat{\Rel}(X)$ to zero. Consider a double point degeneration $f:W\to \PP^1$, $W\to X$.  Let $W_\infty = f^{-1}(\infty)$ be a smooth fiber and  $W_0 = f^{-1}(0) = A \cup B$. Recall that the double point relation is
\[ [W_\infty\to X] - [A\to X] - [B\to X] +[\PP_{A\cap B}\to X],\]
where $\PP_{A\cap B}=  \PP(\cO_{A\cap B}(A) \oplus \cO_{A\cap B})$. Since $\Rel(X)$ is generated by the double point relations, it suffices to prove that 
\[  d[W_\infty\to X] - d[A\to X] -d [B\to X] +d[\PP_{A\cap B}\to X] = 0 .\]

Let $V=W\times \PP^1$. We blow up $V$ along smooth centers lying over $W\times \{0\}$ that intersect the pull-back of $W_0\times \PP^1+ W_\infty\times \PP^1+ W\times\{0\}$  normally.  Let the result be $\tilde{V}$, such that the inverse image of $W\times \{0\}$ has every component quasiprojective.  Let
\[ g: \tilde{V} \to W\times\PP^1 \to \PP^1\times\PP^1.\]
Define 
\[ E=g^*(\PP^1\times \{0\}),  \quad D_0=g^*(\{0\}\times \PP^1), \quad  D_\infty=g^*(\{\infty\}\times \PP^1).\]
By construction, all components of $E$ are quasiprojective.
Let $D_0'$, $D_\infty'$ be the sums of components in $D_0$, $D_\infty$ that do not map to $(0,0)$ or $(\infty,0)$ in $\PP^1\times\PP^1$. Then $D_\infty'$ is the blowup of $W_\infty\times\PP^1$ along centers lying over $W_\infty\times \{0\}$.  Since $D'_\infty$ is smooth and has no component in common with $E$, it follows that  $[D_\infty'\bullet E \to |D_\infty'|\cap |E|]$ is the divisor class of $E|_{D'_\infty}$, which  gives the distinguished lifting of $[W_\infty\to X]$. 
 
 Similarly,  $D_0'$ is the blowup of $W_0\times\PP^1 = (A\cup B)\times \PP^1$ along centers lying over $W_0\times \{0\}$. The divisor $D_0'$ is a union $A'\cup B'$ of  two smooth divisors, the blowups of $A\times \PP^1$ and $B\times \PP^1$. The intersection of these divisors is a blowup of $(A\cap B)\times\PP^1$. It is proved in \cite{descentseq}  that in this situation $[D_0'\bullet E \to |D_0'|\cap |E|]$, gives the class $d[A\to X] +d [B\to X] -d[\PP_{A\cap B}\to X]$. 
 
 When pushed forward to $|E|$, the classes $[D_\infty'\bullet E \to |D_\infty'|\cap |E|]$ and $[D_0'\bullet E \to |D_0'|\cap |E|]$ are equal to $\ch(\cO_{|E|}(D_\infty))[E\to |E|]$ and $\ch(\cO_{|E|}(D_0))[E\to |E|]$. Since $D_0$ and $D_\infty$ are linearly equivalent divisors, the two classes become equal in $\omega_*(|E|)$, hence their push-forwards are equal in $\omega_*(X)$.
\qed

\subsection{First Chern class operators on $\hat{\omega}_*(X)$.}

The isomorphism $\psi:\omega_*(X) \to \hat{\omega}_*(X)$ induces first Chern class operators on $\hat{\omega}_*(X)$. We will show below that the property (Sect) holds in $\hat{\omega}_*(X)$ (recall the definitions of properties (Dim), (Sect) and (FGL) in Section~\ref{sect-chern}). The other two properties (Dim) and (FGL) follow trivially from the same properties in $\omega_*(X)$.

\begin{lemma} \label{lem-sect} 
 Let $Y$ be a smooth variety, $L$ a line bundle on $Y$, and $i: Z\to Y$ the closed embedding of the subscheme defined by a transverse section of $L$. Then in $\hat\omega_*(Y)$
\[ \ch(L) (1_Y) = [i: Z\to Y].\]
\end{lemma}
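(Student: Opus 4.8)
The plan is to reduce the statement in $\hat\omega_*(Y)$ to the known property (Sect) in $\omega_*(Y)$ via the isomorphism $\psi$, using the machinery of distinguished liftings together with the Embedded Chow's Lemma to replace $Y$ by a quasiprojective model on which the section calculation can be carried out honestly. The subtlety, as the authors warn, is that $\hat\omega_*$ has no intrinsic first Chern class operator: $\ch(L)$ on $\hat\omega_*(Y)$ is \emph{defined} by transporting along $\psi$, so the equation $\ch(L)(1_Y) = [i:Z\to Y]$ really means $\psi^{-1}\ch(L)\psi(1_Y) = [i:Z\to Y]$, i.e.\ we must show $d\bigl([i:Z\to Y]\bigr) = \ch(L)(1_Y)$ in $\omega_*(Y)$, where $d$ is the distinguished lifting (two-sided inverse of $\psi$) and the right-hand side is computed in $\omega_*(Y)$ after resolving.

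First I would choose, using Lemma~\ref{lem-emb-Chow} applied to $W=Y$ with $D = Z$ (thought of as a reduced s.n.c.\ divisor, since $Z$ is the smooth zero scheme of a transverse section, hence a smooth divisor) and $E=0$, a birational morphism $g:\tilde Y \to Y$, a composite of blowups with centers lying over $|Z|=Z$ meeting $Z$ normally, such that every component of $g^*Z$ is quasiprojective; by taking the blowups a little further I can in fact arrange $\tilde Y$ itself to be quasiprojective. Because all centers lie over $Z$, $g$ is an isomorphism over $Y\setminus Z$, and the strict transform $\tilde Z$ of $Z$ in $\tilde Y$ is the zero scheme of a transverse section of $g^*L$ (more precisely of $g^*L$ twisted down by the exceptional components, which are supported over $Z$). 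Next I would compute the distinguished lifting of $[i:Z\to Y]$. Since $Z$ is already a smooth variety but possibly not quasiprojective, I must resolve $Z$ as well; but the natural way to do both at once is to observe that $[i:Z\to Y]$ as a class in $\hat\omega_*(Y)$ equals $\ch_{\hat\omega}(L')(1_{\tilde Y})$ pushed forward along $g$ for a suitable presentation — more cleanly, I would instead exhibit a single smooth $W'$, a blowup of $Y\times\PP^1$ with centers over $Y\times\{0\}$, whose central fiber $D = (p_2\circ\pi)^*\{0\}$ has quasiprojective components and computes, by formula~\eqref{eq2} and property (Sect) in $\omega_*$, the class $\ch(L)(1_Y)$ after push-forward to $Y$. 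Concretely: deform the section of $L$ to $0$ linearly, i.e.\ take the pencil inside $Y\times\PP^1$ cut out by $s\cdot t_0$ versus (a trivializing section)$\cdot t_1$, resolve it via Embedded Chow to get $W'$, and note that the fiber over $0$ is, up to the quasiprojectivizing blowups, the divisor $Z$ plus exceptional pieces over centers in $Z$.

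Then the key computation is: on one hand, the distinguished lifting of $[i:Z\to Y]$ is $[D\to|D|]$ pushed to $Y$, and this is independent of the chosen $W'$; on the other hand, for the specific $W'$ coming from the linear deformation of $s$ to $0$, the divisor $D=g^*(Y\times\{0\})$ is linearly equivalent inside $W'$ to $D_\infty = g^*(Y\times\{\infty\})$, which is the (resolved) pull-back of a trivial divisor, and by the argument already used at the end of the proof of Theorem~\ref{thm-main} — push both $[D\bullet E\to\cdots]$-type classes to the quasiprojective $|E|$ where first Chern classes of linearly equivalent divisors agree — one gets that $[D\to|D|]$ pushed to $Y$ equals $\ch(\cO_Y(Z))(1_Y) = \ch(L)(1_Y)$, the last equality being property (Sect) and the identification $L \isom \cO_Y(Z)$ coming from the transverse section. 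Equivalently and more directly: by property (Sect) in $\omega_*$ applied on a quasiprojective resolution together with the compatibility of divisor classes~\eqref{eq2} with push-forward (as recalled in Section~\ref{sec-div-cl}, $[D\to|D|]$ pushed to a quasiprojective ambient space is $\ch(\cO(D))(1)$), the distinguished lifting of $[Z\to Y]$ is exactly $\ch(L)(1_Y)$ in $\omega_*(Y)$. Applying $\psi$ and using $\psi\circ d = \id$ gives the claim.

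The hard part will be bookkeeping the exceptional components: when I resolve the pencil $s/1$ on $Y\times\PP^1$, the central fiber is not literally $Z$ but $Z$ together with exceptional divisors $F_j$ lying over the blown-up centers $C_j\subset Z$, with multiplicities. I need to argue that these extra pieces contribute nothing to the distinguished lifting — precisely the mechanism in item~(4) of Section~\ref{sec-chernclass} and in Lemma~\ref{lem-surj}: each $F_j$ and each $\PP_{D_j}$ is projective over a center $C_j$ of smaller $\nu$-invariant, and in the divisor-class formula~\eqref{eq2} the terms involving components over $(0,0)$ die when pushed to $|E|$ because $E$ restricts trivially there. So the residual verification is that the \emph{linear equivalence} $D\sim D_\infty$ on the resolved pencil, combined with these vanishing-over-$(0,0)$ arguments, collapses $[D\to|D|]$ (pushed to $Y$) onto $\ch(L)(1_Y)$ exactly. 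Once that is in place, (Sect) for $\hat\omega_*$ follows, and as the authors note (Dim) and (FGL) are inherited from $\omega_*$ through $\psi$ with no further work.
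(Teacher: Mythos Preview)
Your intuition is right --- the argument should go through the product of divisor classes and the $D'+F$ splitting with the $F$-contributions dying --- but the construction you propose is not the right one, and some steps as written do not work.

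First, two local problems. The ``pencil'' you describe, ``$s\cdot t_0$ versus (a trivializing section)$\cdot t_1$'', does not exist: $L$ has no trivializing section unless $L$ is trivial, so there is nothing to deform $s$ to. And the aside ``$[i:Z\to Y]$ equals $\ch_{\hat\omega}(L')(1_{\tilde Y})$ pushed forward'' is circular, since the first Chern class operator on $\hat\omega_*$ is precisely what the lemma is trying to establish.

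More importantly, no deformation or linear-equivalence argument is needed at all. The paper's proof is shorter and runs on a \emph{single} resolution. Choose $f:W\to Y\times\PP^1$ computing the distinguished lifting $d(1_Y)$, so $E=f^*(Y\times\{0\})$ has all components quasiprojective; also arrange that $D=f^*(Z\times\PP^1)$ meets $E$ in an s.n.c.\ configuration. Now compute the single class $[E\bullet D]$ two ways. Pushed to $|E|$ it is $\ch(\cO(D)|_{|E|})[E\to|E|]$, and since $\cO(D)|_{|E|}$ is the pullback of $L$, pushing to $Y$ gives $\ch(L)(d(1_Y))$. On the other hand, write $D=D'+F$ with $D'$ the strict transform of $Z\times\PP^1$ and $F$ exceptional; the $[E\bullet F]$-part vanishes when pushed to $|E|$ (since $\cO(E)|_{|F|}$ is trivial and the push-forward factors through $|F|$), so $[E\bullet D]$ agrees with $[E\bullet D']$, which by property~(1) of the product is just the divisor class $[E|_{D'}\to|E|_{D'}|]$. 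The key observation you are missing is that $D'\to Z\times\PP^1$ is itself a resolution computing the distinguished lifting $d(1_Z)$: it is a blowup of $Z\times\PP^1$ along centers over $Z\times\{0\}$, and the components of $E|_{D'}$ are closed in the quasiprojective components of $E$, hence quasiprojective. So this second computation yields $d[Z\to Y]$.

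In short: you do not need to resolve $Y$ to something quasiprojective, nor build a pencil, nor invoke $D_0\sim D_\infty$. The same $W$ that computes $d(1_Y)$ already contains, as the strict transform $D'$ of $Z\times\PP^1$, a model computing $d(1_Z)$, and the product $[E\bullet D]$ ties the two together.
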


\begin{proof}
We need to show that in $\omega_*(Y)$
 \[ \ch(L) (d(1_Y)) = d [i: Z\to Y]. \]

To construct $d(1_Y)$, let $f: W\to Y\times \PP^1$ be the blowup along smooth centers lying over $Y\times\{0\}$, such that $E=f^*(Y\times \{0\})$ has all its components quasiprojective. Let $D = f^*(Z\times \PP^1)$. We may also assume that $E+D$ has s.n.c. 

Write $D=D'+F$, where $D'$ is the strict transform of $Z\times \PP^1$. Then the class $[E\bullet F\to |E|\cap|F|]$, when pushed forward to $|F|$ is equal to 
\[ \ch(\cO(E))([F\to |F|]) = 0\]
because $\cO(E)$ is trivial on $F$. (Note that $|F|\subset |E|$, hence $F$ also has all its components quasiprojective and the divisor class $[F\to |F|]$ makes sense.) The push-forward of the class $[E\bullet F\to |E|\cap|F|]$ to $|E|$ factors through $|F|$, hence it is zero. This implies that, when pushed forward to $|E|$, the classes
\[ [E\bullet D\to |E|\cap|D|],\quad [E\bullet D'\to |E|\cap|D'|] \]
become equal. The first class pushed forward to $E$ is 
 \[ \ch(\cO(D))([E\to |E|]),\]
which further pushed forward to $Y$ becomes equal to $\ch(L)(1_Y)$. Since $D'$ is smooth and has no components in common with $E$, the second class is equal to $[E|_{|D'|}\to |E|_{|D'|}|]$, which becomes equal to $d [i: Z\to Y]$ when pushed forward to $Y$.
\end{proof}

Recall that we defined the divisor class $[D\to |D|] \in \omega_*(|D|)$ for $D$ an s.n.c. divisor on a smooth variety $Y$, assuming that all components of $D$ are quasiprojective. Given first Chern class operators on $\hat{\omega}_*$, the same formula for $[D\to |D|] \in \hat{\omega}_*(|D|)$ makes sense even when the components of $D$ are not quasiprojective. The previous lemma then implies that The class 
$[D\to |D|]$ pushed forward to $Y$ becomes equal to  $\ch(\cO_Y(D))(1_Y) \in \hat{\omega}_*(Y)$, with no assumptions about quasiprojectivity of $Y$ or $D$.

%%%%%%%%%%%%%%%%%%%%%%%%%%%%%%%%%%%%%%%%%%%%%%%%%%%%%%%%%%%%%%%%%%%%%%%%%%%%%%%%%%%%%%%%%%%%%%%%%%%%%%%%%%%%%%%%%%%%%%%%%%%%%%%%
%%                                                                                                                            %%
%%                               SECTION 5.    THE THEORIES $\Omega_*(X)$ AND $\hat\Omega_*(X)$                               %%
%%                                                                                                                            %%
%%%%%%%%%%%%%%%%%%%%%%%%%%%%%%%%%%%%%%%%%%%%%%%%%%%%%%%%%%%%%%%%%%%%%%%%%%%%%%%%%%%%%%%%%%%%%%%%%%%%%%%%%%%%%%%%%%%%%%%%%%%%%%%%

\section{The theories $\Omega_*(X)$ and $\hat\Omega_*(X)$}         \label{sec-Omega} 

We start by recalling the construction of the algebraic cobordism theory $\Omega_*(X)$ in \cite{Levine-Morel}.

Let $\cZ_*(X)$ be the graded free abelian group generated by isomorphism classes of cobordism cycles of the form
\[ [f:Y\to X, L_1,\ldots,L_r],\]
where $Y$ is a smooth quasiprojective variety, $f$ is a proper morphism, and $L_i$ are line bundles on $Y$. An isomorphism class means an isomorphism  class of $f$ together with isomorphism classes of line bundles $L_i$, possibly after a permutation. The degree of the cycle above is $\dim Y -r$.  
The groups $\cZ_*(X)$ have a
 functorial push-forward homomorphism $g_*: \cZ_*(X)\to \cZ_*(Z)$ for $g: X\to Z$ proper, and a functorial pull-back homomorphism $g^*: \cZ_*(Z)\to \cZ_{*+d}(X)$ for $g: X\to Z$ a  smooth quasiprojective morphism of relative dimension $d$. The first Chern class operators on $\cZ_*(X)$ are defined by 
\[ \ch(L) [f:Y\to X, L_1,\ldots,L_r] = [f:Y\to X, L_1,\ldots,L_r, f^*(L)]\]
for $L$ a line bundle on $X$.

The cobordism groups $\Omega_*(X)$ are defined by imposing relations on $\cZ_*(X)$.
% coming from the properties (Dim), (Sect) and (FGL) of Section~\ref{sect-chern}. Since the theory $\Omega_*$ needs to have push-forward and pull-back homomorphisms, compatible with first Chern class operations, this means that we also need to impose push-forwards and pull-backs of such relations, as well as the results of applying first Chern class operators to the relations. 
The construction of $\Omega_*(X)$ is given in three steps corresponding to properties (Dim), (Sect) and (FGL).

\begin{enumerate}
 \item Let $\langle\cR_*^{\text{Dim}}\rangle(X)$ be the subgroup of $\cZ_*(X)$ generated by cobordism cycles of the form 
\[ [f:Y\to X, \pi^*(L_1),\ldots,\pi^*(L_r), M_1,\ldots, M_s],\]
where $Z$ is a smooth quasiprojective variety, $\pi: Y\to Z$ is a smooth quasiprojective morphism, $L_1,\ldots,L_r$ are line bundles on $Z$, and $r>\dim Z$. Define
\[ \underline\cZ_*(X) = \cZ_*(X)/\langle\cR_*^{\text{Dim}}\rangle(X).\]

\item Let $\langle\cR_*^{\text{Sect}}\rangle(X)$ be the subgroup of $\underline\cZ_*(X)$ generated by differences of cobordism cycles of the form
\[ [Y\to X, L_1,\ldots,L_r] - [Z\to X, i^*(L_1),\ldots,i^*(L_{r-1})],\]
where $i: Z \to Y$ is the closed embedding of the zero locus of a transverse section of $L_r$. Define
\[ \underline\Omega_*(X) = \underline\cZ_*(X)/\langle\cR_*^{\text{Sect}}\rangle(X).\]

\item Let $\langle\LL_* \cR_*^{\text{FGL}}\rangle(X)$ be the $\LL_*$-submodule of $\LL_* \tensor \underline\Omega_*(X)$ generated by elements of the form
\[ [Y\to X, L_1,\ldots,L_r, L\tensor M] - \sum_{i,j\geq 0} a_{ij} [Y\to X, L_1,\ldots,L_r,  \underbrace{L,\ldots,L}_{i \textnormal{\ times}}, \underbrace{M,\ldots, M}_{j \textnormal{\ times}}],\]
where $a_{ij}\in \LL_*$ are the coefficients in the formal group law $F_\LL$. 
%In the last sum, the line bundles $L$ and $M$ appear respectively $i$ and $j$ times.
Define
\[ \Omega_*(X) = \LL_* \tensor \underline\Omega_*(X)/\langle\LL_* \cR_*^{\text{FGL}}\rangle(X).\]
\end{enumerate}

There is a natural homomorphism $\nu: \Mp(X) \to Z_*(X)$ mapping a cycle $[Y\to X] \in \Mp(X)$ to $[Y\to X] \in Z_*(X)$. It is proved in \cite{Levine-Pandharipande} that $\nu$ induces an isomorphism
\[ \nu: \omega_*(X)\to \Omega_*(X),\]
compatible with push-forward and pull-back homomorphisms and first Chern class operators.

Let us now define the theory $\hat\Omega_*(X)$. Let $\hat\cZ_*(X)$ be the graded free abelian group generated by the isomorphism classes of cobordism cycles 
\[ [f:Y\to X, L_1,\ldots,L_r],\]
where $Y$ is a smooth variety, $f$ is a proper morphism, and $L_i$ are line bundles on $Y$. We construct $\hat\Omega_*(X)$ by imposing relations on $\hat\cZ_*(X)$. Compared to the construction of $\Omega_*(X)$, only the first step needs to be changed to allow non-quasiprojective varieties $Z$.

\begin{enumerate}
 \item Let $\langle\hat\cR_*^{\text{Dim}}\rangle(X)$ be the subgroup of $\hat\cZ_*(X)$ generated by cobordism cycles of the form 
\[ [f:Y\to X, \pi^*(L_1),\ldots,\pi^*(L_r), M_1,\ldots, M_s],\]
where $Z$ is a smooth variety, $\pi: Y\to Z$ is a smooth morphism, $L_1,\ldots,L_r$ are line bundles on $Z$, and $r>\dim Z$. Define
\[ \underline{\hat\cZ}_*(X) = \hat\cZ_*(X)/\langle\hat\cR_*^{\text{Dim}}\rangle(X).\]

\item Let $\langle\hat\cR_*^{\text{Sect}}\rangle(X)$ be the subgroup of $\underline{\hat\cZ}_*(X)$ generated by differences of cobordism cycles of the form
\[ [Y\to X, L_1,\ldots,L_r] - [Z\to X, i^*(L_1),\ldots,i^*(L_{r-1})],\]
where $i: Z \to Y$ is the closed embedding of the zero locus of a transverse section of $L_r$. Define
\[ \underline{\hat\Omega}_*(X) = \underline{\hat\cZ}_*(X)/\langle\hat\cR_*^{\text{Sect}}\rangle(X).\]

\item Let $\langle\LL_* \hat\cR_*^{\text{FGL}}\rangle(X)$ be the $\LL_*$-submodule of $\LL_* \tensor \underline{\hat\Omega}_*(X)$ generated by elements of the form
\[ [Y\to X, L_1,\ldots,L_r, L\tensor M] - \sum_{i,j\geq 0} a_{ij} [Y\to X, L_1,\ldots,L_r, \underbrace{L,\ldots,L}_{i \textnormal{\ times}}, \underbrace{M,\ldots, M}_{j \textnormal{\ times}}],\]
where $a_{ij}\in \LL_*$ are the coefficients in the formal group law $F_\LL$.
%In the last sum, the line bundles $L$ and $M$ appear respectively $i$ and $j$ times. 
Define
\[ \hat\Omega_*(X) = \LL_* \tensor \underline{\hat\Omega}_*(X)/\langle\LL_* \hat\cR_*^{\text{FGL}}\rangle(X).\]
\end{enumerate}
 
There is again a natural homomorphism $\hat\nu: \hat\cM^+(X)\to  \hat\cZ_*(X)$. We claim that it descends to a homomorphism 
\[ \hat\nu: \hat\omega_*(X)\to \hat\Omega_*(X).\]
This follows if we can prove that the double point relations in $\hat\cR(X)$ map to zero in $\hat\Omega_*(X)$. This statement for $\cR(X)$ and $\Omega_*(X)$ is proved in \cite{Levine-Pandharipande}(Corollary 10). The same proof works word by word in our case.

There is also an obvious homomorphism $\phi: \Omega_*(X)\to \hat\Omega_*(X)$, induced by the inclusion of cycles $\cZ_*(X)\hookrightarrow \hat\cZ_*(X)$. These homomorphisms give the following commutative square:
\[ \begin{CD}  
 \omega_*(X) @>{\psi}>>  \hat\omega_*(X)  \\
@V{\nu}VV @VV{\hat\nu}V\\
 \Omega_*(X) @>{\phi}>>  \hat\Omega_*(X).\\
\end{CD} \]
The maps in this square are compatible with proper push-forward homomorphisms, smooth quasiprojective pull-back homomorphisms and first Chern class operators.

\begin{theorem}\label{thm-main2} 
The homomorphism $\phi: \Omega_*(X) \to \hat\Omega_*(X)$ is an isomorphism for all $X$ in $\Schk$.
\end{theorem}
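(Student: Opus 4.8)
The plan is to deduce Theorem~\ref{thm-main2} from Theorem~\ref{thm-main} by exploiting the commutative square relating the four theories. I would argue that in that square $\psi$ is an isomorphism (already proved), that $\nu$ is an isomorphism (this is the Levine--Pandharipande comparison recalled above), and that $\hat\nu$ is an isomorphism; from these three facts and the commutativity $\phi\circ\nu = \hat\nu\circ\psi$ one concludes $\phi = \hat\nu\circ\psi\circ\nu^{-1}$ is an isomorphism. So the real content is to show $\hat\nu:\hat\omega_*(X)\to\hat\Omega_*(X)$ is an isomorphism, and to do this for all $X$ simultaneously in a way compatible with the operations.

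First I would set up the hatted analogue of the Levine--Pandharipande argument. One constructs an inverse $\hat\Omega_*(X)\to\hat\omega_*(X)$ by sending a cobordism cycle $[f:Y\to X, L_1,\ldots,L_r]$ to an expression built from $f$ and iterated first Chern class / divisor-class constructions. Concretely, for $r=0$ the cycle $[Y\to X]$ maps to its class in $\hat\omega_*(X)$; for $r>0$ one uses the divisor-class construction to realize $\ch(L_r)$: write $L_r\cong\mathcal O_Y(A-B)$ locally — but since $Y$ need not be quasiprojective, such global $A,B$ need not exist, so instead one uses the first Chern class operators on $\hat\omega_*$ that were produced in Section~\ref{sect-chern} (induced via $\psi$) together with Lemma~\ref{lem-sect}, which gives exactly the section axiom (Sect) in $\hat\omega_*$. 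Since $\hat\omega_*$ has first Chern class operators satisfying (Dim), (Sect), (FGL) — the first two being the content of the remarks after Lemma~\ref{lem-sect} and (FGL) being inherited from $\omega_*$ — it is a Borel--Moore functor of geometric type in the sense of \cite{Levine-Morel}, and the universal-property argument of \cite{Levine-Pandharipande} applies verbatim to yield a well-defined two-sided inverse of $\hat\nu$. Thus $\hat\nu$ is an isomorphism.

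With $\hat\nu$ in hand, I would then simply chase the square. Fix $X\in\Schk$. By Theorem~\ref{thm-main}, $\psi$ is an isomorphism; by \cite{Levine-Pandharipande}, $\nu$ is an isomorphism; by the previous paragraph, $\hat\nu$ is an isomorphism. Commutativity of the square gives $\phi = \hat\nu\circ\psi\circ\nu^{-1}$, a composition of isomorphisms, hence $\phi$ is an isomorphism. This is compatible with the stated operations because every map in the square is.

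The main obstacle is verifying that $\hat\omega_*$ really is an oriented Borel--Moore functor of geometric type, i.e. that the induced first Chern class operators satisfy all the compatibility axioms of \cite{Levine-Morel} (commutativity of Chern classes, compatibility with proper push-forward, smooth pull-back, and exterior products), not merely (Dim), (Sect), (FGL). The paper addresses this: (Sect) is Lemma~\ref{lem-sect}, (Dim) and (FGL) are inherited through the isomorphism $\psi$, and the remaining compatibilities with push-forward/pull-back/products are automatic because those operations on $\hat\omega_*$ are defined at the cycle level and transported through $\psi$ from the ones on $\omega_*$, where the axioms hold. Once this is granted, the Levine--Pandharipande machinery runs without change and the square-chase is routine. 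An alternative, slightly more self-contained route — which avoids re-deriving geometric-type axioms for $\hat\omega_*$ — is to construct the inverse of $\phi$ directly at the level of $\hat\cZ_*(X)$ by induction on $\nu(Y)$ using the embedded Chow's lemma (Lemma~\ref{lem-emb-Chow}) exactly as in the proof of Lemma~\ref{lem-surj}, expressing $[Y\to X,L_1,\ldots,L_r]$ with $Y$ non-quasiprojective in terms of quasiprojective cycles modulo the relations $\langle\hat\cR^{\mathrm{Dim}}\rangle$, $\langle\hat\cR^{\mathrm{Sect}}\rangle$, $\langle\LL_*\hat\cR^{\mathrm{FGL}}\rangle$; but the square-chase above is cleaner and I would present that.
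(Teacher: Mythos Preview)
Your square-chase is valid, and reducing to $\hat\nu$ being an isomorphism is a legitimate route. But once unpacked it is the paper's argument in different clothing rather than a genuinely different proof. The paper constructs a left inverse $\delta:\hat\cZ_*(X)\to\Omega_*(X)$ by
\[
\delta[f:Y\to X,L_1,\ldots,L_r]=f_*\,\ch(L_1)\circ\cdots\circ\ch(L_r)\circ\nu\circ d(1_Y),
\]
and then checks that $\delta$ kills each of $\langle\hat\cR^{\mathrm{Dim}}\rangle$, $\langle\hat\cR^{\mathrm{Sect}}\rangle$, $\langle\LL_*\hat\cR^{\mathrm{FGL}}\rangle$. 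Your proposed inverse to $\hat\nu$ sends the same generator to $f_*\,\ch(L_1)\cdots\ch(L_r)(1_Y)\in\hat\omega_*(X)$; composing with the known isomorphism $\nu\circ\psi^{-1}:\hat\omega_*\to\Omega_*$ gives exactly $\delta$. So ``$\hat\nu$ has an inverse'' and ``$\delta$ descends'' are the same three checks, and the (Sect) and (FGL) checks are indeed handled by Lemma~\ref{lem-sect} and the formal group law, just as you say.

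Where your sketch is too quick is the (Dim) check. The relation group $\langle\hat\cR^{\mathrm{Dim}}\rangle$ involves a smooth morphism $\pi:Y\to Z$ with $Z$ possibly non-quasiprojective and $r>\dim Z$. To kill this you would invoke (Dim) for $1_Z\in\hat\omega_*(Z)$ together with the compatibility $\ch(\pi^*L)\circ\pi^*=\pi^*\circ\ch(L)$ in $\hat\omega_*$. But that compatibility with smooth (non-quasiprojective) pull-back is not automatic from the definition $\ch(L)_{\hat\omega}=\psi\circ\ch(L)_\omega\circ\psi^{-1}$, since $\omega_*$ has no such pull-back to transport. The paper avoids this issue entirely: it chooses the variety $W$ computing $d(1_Y)$ so that $W\to Y\times\PP^1\to Z\times\PP^1$ factors through the variety $U$ computing $d(1_Z)$, which forces every face $D^J$ to map through a quasiprojective face $E^I$ with $\dim E^I\le\dim Z<r$, and then ordinary (Dim) in $\Omega_*(D^J)$ finishes it. Either supply that factoring argument, or prove the Chern/smooth-pull-back compatibility in $\hat\omega_*$ directly; once you do, your proof and the paper's coincide.
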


\begin{proof}
 We follow the same steps as in the proof of Theorem~\ref{thm-main}. We first show that $\phi$ is surjective, and then construct a left inverse $\delta$ of $\phi$, proving injectivity of $\phi$.

To prove surjectivity of $\phi$, it suffices to show that cobordism classes of the form $[Y\to X]\in \hat\cZ_*(X)$ lie in the image of $\phi$. These classes lie in the image of $\hat\nu$, hence they can be lifted to $\omega_*(X)$ and thus come from $\Omega_*(X)$. 

To prove injectivity of $\phi$, we construct a distinguished lifting $\delta: \hat\cZ_*(X) \to \Omega_*(X)$. Recall that if $[f:Y\to X]\in \hat{\cM}(X)$ then we constructed its distinguished lifting
\[ d [f:Y\to X] = f_*(d(1_Y)) \in \omega_*(X).\]
Define the distinguished lifting $\delta$ on generators
\[ \delta [f: Y\to X, L_1,\ldots,L_r] = f_* \ch(L_1)\circ \cdots \circ \ch(L_r)\circ \nu \circ d (1_Y).\]
Note that $f$ is in general only proper, hence we need proper push-forward in $\Omega_*$.
 When $Y$ is quasiprojective, then $d(1_Y) = 1_Y$, hence the composition
 \[ \cZ_*(X) {\longrightarrow} \hat{\cZ}_*(X)  \stackrel{\delta}{\longrightarrow} \Omega_*(X) \]
is the canonical homomorphism.

We need to show that $\delta$ descends to a homomorphism 
\[ \delta: \hat\Omega_*(X) \to \Omega_*(X).\]
We do this in three steps:

\begin{enumerate}
 \item Let $[f:Y\to X, g^*(L_1),\ldots,g^*(L_r), M_1,\ldots, M_s]$ be a generator of $\langle\hat\cR_*^{\text{Dim}}\rangle(X)$, where $g:Y\to Z$ is a smooth morphism between smooth varieties and $r>\dim Z$. We need to show that $\delta$ maps such cycles to zero, hence it induces a homomorphism $\delta: \underline{\hat\cZ}_*(X) \to \Omega_*(X)$. 

It suffices to prove that 
\[ \ch(g^*(L_1))\circ \cdots \circ \ch(g^*(L_r))\circ \nu \circ d (1_Y) = 0.\]
Let $\pi: W\to Y\times \PP^1$, $D=\pi^{*}(Y\times \{0\})$ define the distinguished lifting $d(1_Y)$. Similarly, let $\rho: U\to Z\times \PP^1$, $E=\rho^{*}(Z\times \{0\})$ define the distinguished lifting $d(1_Z)$. We may choose $\pi$ such that $W\to Y\times\PP^1\to Z\times\PP^1$ factors through $U$. Then every face $D^J$ of $D$ maps to some face $E^I$ or $E$. To apply $\ch(L_i)$ to $\nu\circ d(1_Y)$, we need to apply to the classes $1_{D^J}\in \Omega_*(D^J)$ the first Chern class operator of the pullback of $L_i$ along
\[ h: D^J \to |D| \to |E| \to Z.\]
Since this morphism factors through $D^J\to E^I$, where $E^I$ is quasiprojective and $\dim E^I \leq \dim Z < r$, it follows from the property (Dim) in $\Omega_*(D^J)$ that 
\[ \ch(h^* L_1)\circ \cdots \circ \ch(h^* L_r) (1_{D^J}) = 0.\]

\item Consider a generator 
\[ [Y\to X, L_1,\ldots,L_r] - [Z\to X, i^*(L_1),\ldots,i^*(L_{r-1})]\]
of $\langle\hat\cR_*^{\text{Sect}}\rangle(X)$. We need to prove that $\delta$ maps this element to zero. Lemma~\ref{lem-sect} proves that 
\[ \ch(L_r) \circ d (1_Y)  =d [Z\to Y],\]
hence the equality also holds after applying $f_* \circ \ch(L_1),\ldots, \ch(L_{r-1}) \circ \nu$.

\item We extend $\delta: \underline{\hat\Omega}_*(X) \to \Omega_*(X)$ $\LL_*$-linearly to a homomorphism
\[ \delta: \LL_*\tensor \underline{\hat\Omega}_*(X) \to \Omega_*(X).\]
We need to show that $\delta$ maps $\langle\LL_* \hat\cR_*^{\text{FGL}}\rangle(X)$ to zero. This is equivalent to the equality 
\[ \ch(L\tensor M)\circ \nu \circ d (1_Y) = \sum_{i,j} a_{i,j} \ch(L)^i \circ \ch(M)^j \circ \nu\circ d(1_Y)\]
for a smooth variety $Y$ and line bundles $L$ and $M$ on $Y$. The equality holds for any class $\alpha\in \Omega_*(Y)$ instead of $\nu \circ d (1_Y)$ by the property (FGL) in the theory $\Omega_*$.
\end{enumerate}
\end{proof}

We mentioned in the introduction that it is possible to strengthen the notion of an oriented Borel-Moore functor with products by requiring push-forward homomorphisms along proper morphisms and pull-back homomorphisms along smooth morphisms. Similarly, one can extend to this situation the notion of an oriented Borel-Moore functor of geometric type \cite{Levine-Morel}. The same argument as in \cite{Levine-Morel} then shows that $\hat\Omega_*(X)$ is a strengthened oriented Borel-Moore functor of geometric type and, moreover, that it is universal among all such functors.

\bibliographystyle{plain}
\bibliography{cobordismbib}

\end{document}